\documentclass[11pt]{article}
\usepackage{indentfirst}
\setlength{\parskip}{3\lineskip}
\usepackage{amsmath,amssymb,amsfonts,amsthm,graphics}
\usepackage{makeidx}
\usepackage{indentfirst,graphics,epsfig,psfrag}
\usepackage{ifpdf}
\usepackage{color}
\setlength{\parindent}{1em}

\setlength{\textwidth}{160mm} \setlength{\textheight}{23cm}
\setlength{\headheight}{0cm} \setlength{\topmargin}{0pt}
\setlength{\headsep}{0pt} \setlength{\oddsidemargin}{0pt}
\setlength{\evensidemargin}{0pt}
\newtheorem{thm}{Theorem}[section]

\newtheorem{lem}{Lemma}[section]

\theoremstyle{definition}

\def\-{\mbox{--}}

\begin{document}
\title{\Large\bf Some upper bounds for 3-rainbow index of graphs }
\author{
\small  Tingting Liu, Yumei Hu\footnote{supported by NSFC No. 11001196. } \\
\small  Department of Mathematics, Tianjin University, Tianjin 300072, P. R. China\\
\small E-mails: ttliu@tju.edu.cn; huyumei@tju.edu.cn;
 }
\date{}
\maketitle
\begin{abstract}
 A  tree $T$, in  an edge-colored graph $G$, is
called {\em a rainbow tree}  if no two edges of $T$ are assigned the
same color. A  {\em $k$-rainbow coloring }of $G$ is an edge coloring
of $G$ having the property that for every set $S$ of $k$ vertices of
$G$, there exists a rainbow tree $T$ in $G$ such that $S\subseteq
V(T)$. The minimum number of colors needed in a $k$-rainbow coloring
of $G$ is the {\em $k$-rainbow  index of $G$ }, denoted by
$rx_k(G)$. In this paper, we consider $3$-rainbow index $rx_3(G)$ of $G$. We first show that for  connected graph $G$ with minimum degree $\delta(G)\geq 3$, the tight upper bound of $rx_3(G)$ is $rx_3(G[D])+4$, where $D$ is the connected $2$-dominating set of $G$. And then we determine a tight upper bound for $K_{s,t}(3\leq s\leq t)$ and a better bound for $(P_5,C_5)$-free graphs. Finally, we obtain a sharp bound for $3$-rainbow index of general graphs.

{\flushleft\bf Keywords}:
% The key-words follow.
 $3$-rainbow index; rainbow tree; connected $2$-dominating set.
\end{abstract}

\section{Introduction}

All graphs considered in this paper are simple, connected and
undirected. We follow the terminology and notation of Bondy and
Murty \cite{bondy2008graph}. An edge-colored graph $G$ is {\em
rainbow connected } if any two vertices are connected by a path whose
edges have distinct colors. The {\em rainbow connection
number} $rc(G)$ of $G$, introduced by Chartrand et al.
\cite{Chartrand2008graph}, is the minimum number of colors that
results in a rainbow connected graph $G$.

Later, another generalization of rainbow connection number was introduced by Chartrand
et al.\cite{Chartrand2009graph} in 2009. A tree $T$ is a {\em rainbow tree} if no two edges of $T$ are
colored the same. Let $k$ be a fixed integer with $2\leq k\leq n$.
An edge coloring of $G$ is called a {\em $k$-rainbow coloring} if
for every set $S$ of $k$ vertices of $G$, there exists a rainbow
tree in $G$ containing the vertices of $S$. The {\em $k$-rainbow
index} $rx_k(G)$ of $G$ is the minimum number of colors needed in a
$k$-rainbow coloring of $G$. It is obvious that $rc(G)=rx_2(G)$.

Let $k$ be a positive integer. A subset $D\subseteq V(G)$ is a $k$-dominating set of the graph $G$ if $|N_G(v)\cap D|\geq k$ for every $v\in V\setminus D$.
The $k$-domination number $\gamma_k(G)$ is the minimum cardinality among the $k$-dominating sets of $G$. Note that the $1$-domination number
$\gamma_1(G)$ is the usual domination number
$\gamma(G)$. A subset $S$ is a connected $k$-dominating set if it is a $k$-dominating set and the graph induced by $S$
is connected. The connected $k$-domination number $\gamma_k^c(G)$
represents the cardinalities of a minimum connected $k$-dominating set.
For $k=1$, we write $\gamma_c$ instead of $\gamma_1^c(G)$.

Chakraborty  et al. \cite{chakraborty2009hardness} showed that
computing the rainbow connection number of a graph is NP-hard. So it
is also NP-hard to compute $k$-rainbow index of an arbitrary graph.
%Therefore, it is worthwhile considering bounds for $k$-rainbow index
%of a graph in our paper. In the paper, we focus our attention on $rx_3(G)$.

Chandran et al.  \cite {Chandran2011dominating}
use  a strengthened connected dominating set (connected $2$-way dominating set) to prove $rc(G)\leq rc(G[D])+3$. This led us to the investigation of what is  strengthening of a connected dominating set which can
apply to consider $3$-rainbow index of a graph.

Recently, for $3$-rainbow index, Li et al.  did some basic results and they obtained the following
theorem.

\begin{thm}\cite{Li2013}\label{thm0}
Let $G$ be a  $2$-connected graph of order $n$ $(n \geq 4)$. Then $rx_3(G)\leq n-2$, with
equality if and only if $G = C_n$ or $G$ is a spanning subgraph of $3$-sun or $G$ is a spanning
subgraph of $K_5\setminus e$  or $G$ is a spanning subgraph of $K_4$.
\end{thm}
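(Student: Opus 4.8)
The plan is to establish $rx_3(G)\le n-2$ by an induction along an ear decomposition, and then to derive the equality characterization by pinning down exactly when that construction is forced to spend all $n-2$ colours.

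For the upper bound I begin with the base case $G=C_n$. Labelling the edges $e_1,\dots,e_n$ cyclically, I colour $e_i$ with $i$ for $1\le i\le n-2$, give $e_{n-1}$ the colour $1$, and give $e_n$ the colour $n-2$. For a triple $\{v_a,v_b,v_c\}$ with $a<b<c$: if $c\le n-1$ the subpath $v_a v_{a+1}\cdots v_c$ already uses the distinct colours $a,\dots,c-1$ and is a rainbow tree; the triples containing $v_n$ are handled using the two special colours, for which one of the three arc-pair paths through the triple always avoids a repeat. Now let $G$ be any $2$-connected graph with $n\ge4$. Such a graph always contains a cycle of length at least $4$ (a $2$-connected graph in which every cycle is a triangle is $K_3$), so I fix such a cycle $C$ and take an ear decomposition $C=G_0\subsetneq G_1\subsetneq\cdots\subsetneq G_k=G$, inducting on $k$. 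In the step $G=G'+P$ with $G'=G_{k-1}$: if $P$ is a chord then $rx_3(G)\le rx_3(G')$ at once; otherwise write $P=x_0x_1\cdots x_px_{p+1}$ with $x_1,\dots,x_p$ the $p\ge1$ new vertices and $x_0,x_{p+1}\in V(G')$, keep an optimal rainbow colouring of $G'$ (using $\le n'-2=(n-p)-2$ colours by induction, where $n'\ge4$ since $V(G_0)\subseteq V(G')$), assign the $p$ fresh colours to $x_0x_1,\dots,x_{p-1}x_p$, and give $x_px_{p+1}$ an old colour. For a triple $S$: if $S\subseteq V(G')$ use the inductive tree; if $S$ has one or two vertices interior to $P$, take the subpath of $P$ from the extreme interior vertices of $S$ down to $x_0$ — it uses only fresh colours, so it is rainbow and avoids $x_px_{p+1}$ — and splice onto it a rainbow tree of $G'$ through $x_0$ and the remaining vertices of $S$; if all three vertices of $S$ are interior to $P$, a subpath of $P$ suffices. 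The two spliced pieces use disjoint colour sets, so the union is a rainbow tree.

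For the ``if'' direction I must check the listed graphs genuinely need $n-2$ colours. For $C_n$: with at most $n-3$ colours the surplus of repeated edges is at least $3$, and a counting argument produces either three equally-coloured edges or three doubled colours placed so that the triple at the appropriate endpoints leaves every one of its three arc-pair Steiner trees with a repeated colour, so that triple has no rainbow tree — hence $rx_3(C_n)\ge n-2$. For $K_4$ a single colour yields only one-edge rainbow trees, so $rx_3\ge 2$; for $K_5\setminus e$ ($n=5$) and the $3$-sun ($n=6$) a finite check shows $2$, respectively $3$, colours never suffice. Finally, if $H$ is a $2$-connected spanning subgraph of $K_4$, $K_5\setminus e$ or the $3$-sun, then $rx_3(H)$ is at least the value for the full graph (deleting edges cannot decrease the parameter) and at most $n-2$ by the upper bound, so every listed graph attains $n-2$.

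The hard part is the ``only if'' direction: if $G$ is $2$-connected with $n\ge4$, $G\ne C_n$, and $G$ is not a spanning subgraph of $K_4$, $K_5\setminus e$ or the $3$-sun, then $rx_3(G)\le n-3$. The idea is to revisit the ear-decomposition construction and exhibit a colour that can be saved: a chord lets one drop a fresh colour outright, and an ear of length at least $2$ can be coloured reusing a colour occurring ``on the far side'' of $G'$, provided no triple straddling the ear is spoilt; the sporadic small graphs are exactly the configurations where every such economy is blocked by a triple. So the proof splits into a structural part for $n\ge7$ — where a $2$-connected non-cycle always carries enough extra edges (a long cycle plus a chord, or an ear with two internal vertices) to force a savable colour, and where a reduction via a connected $2$-dominating set in the spirit of Chandran et al.~\cite{Chandran2011dominating} may be the cleanest device — and an exhaustive analysis of the $2$-connected graphs on $4\le n\le 6$ vertices, where the three exceptional families emerge. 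Confirming that this finite list is exactly right, rather than carrying out the large-$n$ argument, is the main technical obstacle.
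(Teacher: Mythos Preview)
This theorem is not proved in the present paper at all: it is quoted from \cite{Li2013} (Chen, Li, Yang, Zhao) and used as a black box in the proofs of Theorems~\ref{thm6} and~\ref{thm4}. So there is no ``paper's own proof'' to compare against, and no proof is expected of you here.

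That said, a few remarks on your attempt as a standalone proof. The ear-decomposition induction for the upper bound $rx_3(G)\le n-2$ is sound in outline; the splicing works because any subtree of a rainbow tree is itself rainbow, so a $3$-rainbow colouring of $G'$ automatically yields rainbow paths between pairs, which is what you implicitly use when $S$ has two interior vertices on the ear. Your base-case colouring of $C_n$ is stated but not verified: ``one of the three arc-pair paths through the triple always avoids a repeat'' needs an actual case check, since with colours $1$ and $n-2$ each repeated the bad triples are not entirely obvious. The lower bound $rx_3(C_n)\ge n-2$ is asserted via ``a counting argument produces\ldots'' without the argument; this is the genuinely delicate step and deserves a full proof. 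Most seriously, you explicitly leave the ``only if'' direction unfinished: you describe a strategy (save a colour along an ear or chord unless a triple blocks it, then classify the blockers for small $n$) but do not carry it out, and you yourself flag the finite classification as ``the main technical obstacle''. As written this is a proof sketch, not a proof, of the equality characterisation.
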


Here, a $3$-sun is a graph $G$ which is defined from $C_6$ = $v_1v_2 \cdots v_6v_1$ by adding three edges
$v_2v_4$, $v_2v_6$ and $v_4v_6$.

Chartrand et al. \cite{Chartrand2008graph} obtained that
for integers s and t with $ 2\leq s \leq t$,
$rc(K_{s,t})=min\{\sqrt[s]{t},4\}$. Thus, $rx_2(G)=rc(G)\leq 4$.
Li et al.  \cite{Li2013}
consider the regular complete bipartite graphs $K_{r,r}$. They show $rx_3(K_{r,r})=3$ for integer $r$ with $r\geq 3$.

In this paper, we focus on $3$-rainbow index.
In section $2$, we adopt connected $2$-dominating set to study $3$-rainbow index. A coloring strategy is obtained  which uses only a constant number of extra colors outside the dominating set. We prove that $rx_3(G)\leq rx_3(G[D])+4$, where $D$ is the connected $2$-dominating set of
$G$.  In section $3$, We determine a sharp bound of $3$-rainbow index for $K_{s,t}$ ( $3\leq s\leq t$) and an upper bound for $(P_5,C_5)$-free. In section $4$, we investigate a sharp upper for $rx_3(G)$ of general graphs by block decomposition and an upper bound for graphs with $\delta(G)\geq 3$ by connected $2$-dominating set.

\section{A sharp upper of $3$-rainbow index in terms of connected $2$-dominating set}
\begin{thm}\label{thm5}
Let $G$ be a connected graph with minimal degree
$\delta\geq 3$. If $D$ is a connected $2$-dominating set of $G$, then $rx_3(G)\leq rx_3(G[D])+4$ and the bound is tight.
\end{thm}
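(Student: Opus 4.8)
The plan is to extend a $3$-rainbow coloring of $G[D]$ to all of $G$ using only $4$ new colors, by exploiting the fact that every vertex outside $D$ has at least two neighbors inside $D$ (the $2$-domination property) and that $\delta(G)\geq 3$ guarantees enough flexibility among the edges leaving $D$. First I would fix a minimum $3$-rainbow coloring of $G[D]$ with $rx_3(G[D])$ colors, and introduce $4$ fresh colors, say $1,2,3,4$, to be used only on edges with at least one endpoint in $V(G)\setminus D$. For each vertex $v\notin D$, I would select two distinguished neighbors in $D$ and colour the corresponding two ``legs'' of $v$ with a chosen pair from $\{1,2,3,4\}$; the remaining edges incident to $v$ (to $D$ or to other vertices outside $D$) get coloured from $\{1,2,3,4\}$ as well, with the assignment designed so that the two legs of any single $v$ receive distinct colours, and so that for any two vertices $u,v\notin D$ the legs can be combined with a rainbow tree inside $D$ without colour conflicts. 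The natural device is to split $\{1,2,3,4\}$ into two pairs and use a consistent rule (for instance, assign colours to legs according to a fixed $2$-colouring pattern so that distinct outside vertices are ``compatible''), which is exactly the kind of argument used in Chandran et al.\ \cite{Chandran2011dominating} for $rc(G)\le rc(G[D])+3$, adapted here to trees and three terminals.

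The core of the argument is the case analysis on how many of the three terminals $S=\{x,y,z\}$ lie outside $D$. If $S\subseteq D$, the rainbow tree from the colouring of $G[D]$ already works. If exactly one terminal, say $z$, is outside $D$, I would take a rainbow tree $T'$ in $G[D]$ connecting $x$, $y$, and the two distinguished neighbours of $z$ (these are three or four vertices of $D$, still a constant-size terminal set, so such a rainbow tree exists), then attach $z$ via one of its two legs; since the legs use colours from $\{1,2,3,4\}$ which are disjoint from the palette of $G[D]$, rainbowness is preserved, and if one leg's colour happens to coincide with the other needed leg we use the other leg. For two terminals $y,z\notin D$ and one terminal $x\in D$, I would connect $x$ and the (up to four) distinguished neighbours of $y$ and $z$ by a rainbow tree in $G[D]$, then attach $y$ and $z$ by legs; here the delicate point is ensuring the legs of $y$ and the legs of $z$ don't clash with each other, which is where the pairing rule on $\{1,2,3,4\}$ does its work — at worst we have two legs, and two colours out of four always suffice to keep them distinct. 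Finally, for all three terminals outside $D$, I would connect up to six distinguished neighbours in $D$ by a rainbow tree and attach each terminal by a leg, again using the pairing structure so that the (at most three) legs used are pairwise distinctly coloured; this is feasible since we may need only three distinct colours among the four available, and the $2$-domination plus $\delta\ge 3$ condition gives us a choice of which two neighbours to distinguish for each terminal, resolving any residual conflict.

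The main obstacle I anticipate is precisely this last step: coordinating the colours of up to three legs (one per outside terminal) together with the rainbow tree in $D$ on up to six prescribed vertices, while the new palette has only four colours. The subtlety is that a naive assignment might force two legs to share a colour; the fix is to allow each outside vertex a \emph{choice} between its two distinguished neighbours (so effectively a choice between two colours), and to argue by a small combinatorial/parity argument that one can always pick, for the three terminals, legs with pairwise distinct colours out of $\{1,2,3,4\}$ — and that even in the worst case where the naturally chosen rainbow tree in $G[D]$ must be rerouted (because a required connection vertex coincides with another), the connectivity of $G[D]$ together with the freedom in choosing distinguished neighbours gives an alternative tree. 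A secondary technical point is verifying that the auxiliary terminal sets we form inside $D$ (of size up to six) indeed admit rainbow trees under the given $3$-rainbow colouring of $G[D]$; this follows because a $3$-rainbow tree connecting any three of them can be combined, using the Steiner-tree structure, but one must be slightly careful and may instead argue by taking the union of two or three appropriately chosen $3$-rainbow trees and pruning to a tree, checking no colour repeats — which is where most of the routine verification will go. For tightness, I would exhibit a small graph (built from a base graph $H$ on the vertex set $D$ together with pendant-like gadgets realizing $\delta\ge 3$ and the $2$-domination property) for which any $3$-rainbow colouring provably needs $rx_3(G[D])+4$ colours, likely by a counting argument on the edges that must leave $D$.
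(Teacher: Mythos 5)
Your high-level plan coincides with the paper's: fix a $3$-rainbow coloring of $G[D]$ with $k=rx_3(G[D])$ colors, introduce four fresh colors on the edges leaving $D$ and inside $G\setminus D$, and do a case analysis on how many terminals lie outside $D$. However, there are two concrete gaps in how you carry this out. First, you propose to connect, inside $G[D]$, the \emph{two} distinguished neighbours of each outside terminal, producing auxiliary terminal sets in $D$ of size up to four or six. A $3$-rainbow coloring of $G[D]$ only guarantees rainbow trees for sets of \emph{three} vertices; nothing guarantees one for four or six prescribed vertices, and your fallback --- taking the union of two or three $3$-rainbow trees and pruning --- fails because distinct rainbow trees may repeat colors between one another, and pruning cannot remove a repetition that sits on the path connecting required vertices. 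The paper avoids this entirely by attaching each outside terminal through exactly \emph{one} leg (one foot), so the set of vertices that must be joined inside $G[D]$ always has size at most three and the $3$-rainbow property of $G[D]$ applies verbatim. Your instinct to keep both neighbours available "for choice" is what creates the problem; the choice of leg must be made \emph{before} invoking the rainbow tree in $D$, not after.

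Second, your "pairing rule" on $\{1,2,3,4\}$ is left unspecified, and the difficulty it is supposed to resolve cannot be resolved by choice of distinguished neighbours alone. Two-domination guarantees an outside vertex only two legs, hence at most two distinct leg colors; if three outside terminals all carry the same color pair on their legs, no selection yields three pairwise distinct legs, and $\delta\ge 3$ does not add a third leg (the third neighbour may lie outside $D$). The paper's construction is precisely engineered around this: it bipartitions each nontrivial component of $G\setminus D$ into $X$ and $Y$ via a spanning forest, gives $X$-legs colors from $\{k+1,k+2\}$, $Y$-legs colors from $\{k+1,k+3\}$, colors all $X$--$Y$ edges $k+4$, and handles the bad case of three terminals on the same side (say all in $X$) by rerouting one of them through its forest-neighbour in $Y$ using the $k+4$ cross edge and that neighbour's $k+3$ leg. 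Isolated vertices of $G\setminus D$, which have three legs by $\delta\ge 3$, get three distinct leg colors directly. This structural mechanism --- the bipartition plus the fourth color reserved for cross edges as an escape route --- is the actual content of the proof and is missing from your sketch; without it the case of three terminals outside $D$ does not go through. (The tightness claim is also not addressed substantively; the paper establishes it via $K_{s,t}$ with $t$ large, where $rx_3(K_{s,t})=6=rx_3(G[D])+4$ for a four-vertex connected $2$-dominating set.)
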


\begin{proof}
We prove the theorem by demonstrating that $G$ has a 3-rainbow coloring
with $rx_3(G[D])+4$ colors. For $x\in V(G)\setminus D$, its neighbors in $D$ will be called foots of $x$, and the corresponding edges will be called legs of $x$.

We give $G[D]$ a 3-rainbow coloring using colors
$1, 2, \cdots, k~(k=rx_3(G[D]))$. Let $H:=G\setminus D$.
Partition $V(H)$ into sets $X,Y,Z$
as follows. $Z$ is the set of all isolated vertices of $H$.
In every nonsingleton  connected component of $H$, choose a spanning tree.
So we construct a forest on $W:=V(H)\setminus Z$ and choose $X$ and $Y$ as
any one of the bipartitions defined by this forest. Color every $X-D$ edge with $k+1$ or  $k+2$ where each of $k+1,k+2$ appears at least once, every
$Y-D$ edge with $k+1$ or $k+3$ where each of $k+1,~k+3$ appears at least once, every edge between $X$ and $Y$ with $k+4$. Since $G$  has a minimal degree $\delta\geq 3$, every vertex in $Z$ will have at least three neighbors in $D$. Color two of them with $k+1$ and $k+3$ and all the others with $k+4$. Next, we show that under such an edge coloring for any three vertices in $D$ there exists a rainbow tree containing them.

For three vertices $(x,y,z)\in D\times D\times D$, there is already a
rainbow tree containing them in $G[D]$. For three vertices $(x,y,z)\in$
$D\times D\times V(H)$ (or $D\times V(H)\times V(H)$), join any one leg of
$z$ (or $k+1$, $k+3$ ($k+2$) legs of $y$ and $z$ ) with a rainbow
tree containing the corresponding  foot (or two foots), $x$ and $y$ (or $x$) in $G[D]$.
Now we consider the case three vertices $(x,y,z)\in$ $V(H)\times V(H)\times V(H)$.
For three vertices $(x,y,z)\in Z\times Z\times Z$, join three edges which color $k+1,k+4$ and $k+3$ with a rainbow tree containing the corresponding  foots
$(x',y',z')$ in $D$.
For two vertices $(x,y)\in Z\times Z$, $z \in W$, join a
$k+1$ leg of $z$ and $k+3,~k+4$ legs of $x,~y$ with a rainbow
tree containing the corresponding foots in $G[D]$.
Consider one vertex $x\in Z$, two vertices $(y,z) \in W\times W$.
If $(y,z)\in X\times X$, join a $k+4$ leg of $x$ and
$k+1,k+2$ legs of $y$ and $z$
with a rainbow tree containing the corresponding foots  in $G[D]$.
If $(y,z)\in X\times Y$ or $(y,z)\in Y\times Y$, join a $k+4$ leg
of $x$ and $k+1,k+3$ legs of $y$ and $z$ with a rainbow tree containing the
corresponding foots  in $G[D]$.
Then consider  three vertices $(x,y,z) \in W\times W \times W$. If
$(x,y,z) \in X \times X \times X$, we know, for $x \in X$,
$x$ has a neighbor $y(x) \in Y$. $x-y(x)$ edge (colored $k+4$) and
$k+3$ leg of $y(x)$, join $k+1$ leg  of $y$ and $k+2$ leg of $z$
with a rainbow tree containing the corresponding foots in $G[D]$.
Similarly, in other cases, we can find a rainbow tree containing them.
Hence, $G$ has a 3-rainbow coloring with   $rx_3(G[D])+4$ colors.

The proof of tightness is given in the next section.
\end{proof}
\section{Upper bounds for $3$-rainbow index of some special graphs}

In this section, we consider two special graphs: complete bipartite graphs $K_{s,t}$ and $(P_5,C_5)$-free graphs.

\begin{thm}\label{thm6}
For any  complete bipartite graphs $K_{s,t}$ with $3\leq s \leq t$,
$rx_3(K_{s,t})\leq min \{6,s+t-3\}$, and the bound is tight.
\end{thm}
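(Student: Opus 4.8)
The plan is to prove the upper bound $rx_3(K_{s,t}) \le \min\{6, s+t-3\}$ by treating the two quantities in the minimum separately, and then to exhibit a small graph (namely $K_{3,3}$) showing the bound $6$ cannot in general be lowered, so that the bound is tight. For the inequality $rx_3(K_{s,t}) \le s+t-3$, I would invoke Theorem~\ref{thm0}: $K_{s,t}$ with $s \ge 3$ is $2$-connected of order $n = s+t \ge 6$, so $rx_3(K_{s,t}) \le n - 2 = s+t-2$; to shave off the extra color I would argue that $K_{s,t}$ is none of the extremal graphs listed there (it is not $C_n$, not a spanning subgraph of the $3$-sun, of $K_5 \setminus e$, or of $K_4$ once $s \ge 3, t \ge 3$), hence the inequality in Theorem~\ref{thm0} is strict, giving $rx_3(K_{s,t}) \le s+t-3$. (For the small cases where $s+t-3 < 6$, i.e. $K_{3,3}$ with $s+t-3 = 3$ and $K_{3,4}, K_{3,5}$ with $s+t-3 = 4,5$, this already suffices; one should double-check $K_{3,3}$ directly, and indeed $rx_3(K_{3,3}) = 3$ was noted in the introduction from Li et al.)

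The substantive part is the bound $rx_3(K_{s,t}) \le 6$ for all $3 \le s \le t$. Write the bipartition as $(A,B)$ with $|A| = s$, $|B| = t$. The idea is to design an explicit $6$-coloring that works uniformly, regardless of how large $t$ is, by exploiting that any rainbow tree connecting three vertices needs only a bounded number of edges (a Steiner tree on $3$ terminals in a bipartite graph has at most, say, $4$ edges). First I would fix a small "core" set of vertices on each side — say three distinguished vertices $a_1,a_2,a_3 \in A$ and $b_1,b_2,b_3 \in B$ — and color the edges inside the $3 \times 3$ core $K_{3,3}$ with a known $3$-rainbow coloring of $K_{3,3}$ using colors $\{1,2,3\}$. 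Then every remaining vertex $v$ (on either side) must be "attached" to the core: I would color the edges from $v$ to $\{b_1,b_2,b_3\}$ (if $v \in A$) or to $\{a_1,a_2,a_3\}$ (if $v \in B$) using colors from $\{4,5,6\}$, assigning, say, the three edges the three distinct colors $4,5,6$ so that each outside vertex has a rainbow "fan" of size $3$ into the opposite core; all other edges incident to $v$ can be colored arbitrarily, e.g. reusing color $4$. The verification then splits by how many of the three query vertices $S = \{x,y,z\}$ lie outside the core and on which sides they lie; in each case one connects the outside vertices through their color-$\{4,5,6\}$ fans to core vertices and completes inside the core using the colors $\{1,2,3\}$, taking care that the fan-edges chosen have distinct colors among themselves and are disjoint in color from the core path used. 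The case analysis is finite (at most a handful of configurations up to symmetry: all three in the core; two in core, one outside on side $A$ or $B$; one in core; none in core, with the three split as $3$–$0$ or $2$–$1$ between the sides), and in each one a concrete rainbow tree with at most $6$ edges can be pointed to.

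The main obstacle I anticipate is the bookkeeping in the worst configurations — for instance when all three query vertices are outside the core and two of them lie on the same side: then their fans both land in $\{a_1,a_2,a_3\}$ (or $\{b_1,b_2,b_3\}$) and one must route the tree so that the two fan-edges into the core use different colors from $\{4,5,6\}$ and the connecting portion inside the core uses colors from $\{1,2,3\}$ only, without any color repeating across the whole tree; this forces a careful choice of which core vertices to hit and may require the fans to be slightly more structured than "all of $4,5,6$" (perhaps assigning each outside vertex a fixed rotation of the palette, or ensuring the core $K_{3,3}$ coloring has a Steiner tree on any two core vertices plus a third avoiding a prescribed color). Establishing that such a consistent assignment exists — essentially a small finite check, possibly aided by the known structure of the optimal $3$-coloring of $K_{3,3}$ — is where the real work lies; once that is in hand, the remaining cases are routine, and tightness follows from the $K_{3,3}$ computation together with showing $6$ colors are actually needed for, say, $K_{3,t}$ with $t$ large (which should follow from a counting argument: with only $5$ colors the number of distinct rainbow Steiner trees available to separate triples is too small to cover all $\binom{s+t}{3}$ triples as $t \to \infty$).
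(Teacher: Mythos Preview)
Your treatment of the bound $rx_3(K_{s,t})\le s+t-3$ via Theorem~\ref{thm0} matches the paper's, and is in fact more carefully justified. But your route to $rx_3(K_{s,t})\le 6$ is genuinely different. The paper does not build an explicit $6$-coloring at all: it simply observes that $D=\{u_1,u_2,w_1,w_2\}$ is a connected $2$-dominating set of $K_{s,t}$, that $G[D]=C_4$ has $rx_3=2$, and then invokes Theorem~\ref{thm5} to get $rx_3(K_{s,t})\le 2+4=6$. Your core-plus-fan construction (a $3$-colored $K_{3,3}$ core with colors $4,5,6$ on the attaching edges) is a direct, self-contained argument that would also succeed after the case analysis you outline; but the paper's approach is both shorter and structurally tied to the rest of the paper, since this application simultaneously furnishes the promised tightness example for Theorem~\ref{thm5}. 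So what you lose by going direct is that connection; what you gain is independence from the earlier machinery.

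Where your plan has a real gap is the tightness of the value $6$. Your heuristic---that with $5$ colors there are ``too few rainbow Steiner trees to cover all $\binom{s+t}{3}$ triples''---is not the right shape of argument and would not go through as stated, since a single coloring can contain many rainbow trees and each tree covers many triples. The paper's argument is structural rather than counting: for $K_{3,t}$, assign to each $w\in W$ the color code $(c(u_1w),c(u_2w),c(u_3w))$. With $k$ colors there are at most $k^3$ codes, and any code with at most two distinct entries can be shared by at most two vertices (else those three vertices admit no rainbow tree). Hence for $t\ge 2k^3$ some three vertices $w',w'',w'''$ share a code $(a_1,a_2,a_3)$ with $a_1,a_2,a_3$ pairwise distinct; any rainbow tree containing them must pass through all of $u_1,u_2,u_3$ and, to remain connected, through a further $w_i$ whose three incident colors are all distinct from $a_1,a_2,a_3$. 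This forces $k\ge 6$, giving $rx_3(K_{3,t})=6$ for $t\ge 2\cdot 6^3$. You should replace your counting sketch with this code-and-pigeonhole argument.
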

\begin{proof}
 Because
$K_{s,t}$ with $3\leq s \leq t$ is a 2-connected graph, by Theorem \ref{thm0}, we have, $rx_3(K_{s,t})\leq s+t-3$. The equality clearly holds for $s=t=3$ since
$rx_3(K_{3,3})=3$.
Thus, to complete the proof, it suffices
to show $rx_3(K_{s,t})\leq 6$, $3\leq s \leq t$.
Let $U$ and $W$ be the two partite sets of $K_{s,t}$,
where $|U|=s$ and $|W|=t$. Suppose
$U=\{u_1,u_2,\cdots, u_s\},~W=\{w_1,w_2,\cdots,w_t\}$.

Clearly we can find a connected 2-dominating set
$D=\{u_1,u_2,w_1,w_2\}$ of $K_{s,t}$. In addition, $K_{s,t}\setminus D$
is connected, $Z=\emptyset$, by
Theorem \ref{thm5}, $rx_3(K_{s,t})\leq rx_3(G[D])+4=6$.

To prove the sharpness of the above upper bound, we derive the following claim.

{\bf Claim.} For any $s\geq 3$, $t\geq 2\times 6^s$, $rx_3(K_{s,t})=6$.

Firstly, we consider the graph $K_{3,t}$. We may assume that there exists a 3-rainbow coloring $c$ of $K_{3,t}$ with $k$ colors.
Corresponding to this 3-rainbow coloring, for every vertex $w$ in $W$, there is a color code, code($w$), assigned $a_i=c(u_iw)\in \{1,2,\cdots,k\}$, $1\leq i\leq 3$. Observe that any three vertices have at least three distinct colors appeared in their color codes. Thus, we know that at most two vertices have the common code except possibly when $a_1\neq a_2\neq a_3$.
Otherwise, there is no rainbow tree containing these three vertices which have
the same code and at most two colors in color code. Therefore, when $t\geq 2k^3$, there must exist three vertices  $w'$, $w''$, $w'''$ such that code ($w'$)=code($w''$)=code($w'''$)=$\{a_1,a_2,a_3\}$
and $a_1\neq a_2\neq a_3$. If a rainbow tree containing $S=\{w',\ w'',\ w'''\}$, it must contain $u_1,u_2,u_3$ and $w_i$ to guarantee  its connectivity, where $w_i$ belongs to
$W$ and code($w_i$)=$\{b_1,b_2,b_3\}$, where $a_i$, $b_j$ are different from each other, $i=1,2,3;\ j=1,2,3$. Thus $k\geq 6$. So $rx_3(K_{3,t})=6$, when $t\geq 2\times 6^3$. Similarly, we can prove $rx_3(K_{s,t})=6$, for $s\geq 4$, $t\geq 2\times 6^s$.
Thus, this claim also provides  the tight proof of the Theorem \ref{thm5}.
\end{proof}

 Here, we can simply check that the upper bound can not  be generalized to the graphs $K_{2,t}$. By the same method used in the above claim, We may assume that there exists a 3-rainbow coloring $c$ of $K_{2,t}$ with $k$ colors.
Corresponding to this 3-rainbow coloring, there is a color code, code($w$), assigned $a_i=c(u_iw)\in \{1,2,\cdots,k\}$ for $i=1,2$. Observe that at most
two vertices have the common code. It follows, $t\leq 2k^2$. Thus, $k$ is not less than a certain constant  when $t$ is enough large.

To state next theorem, we need to make  more definitions.
A graph $G$ is called
{\em a perfect connected dominant graph} if $\gamma(X)=\gamma_c(X)$, for
each connected induced subgraph $X$ of $G$.
 If $G$ and $H$ are two graphs, we say that $G$ is {\em $H$-free} if $H$ does
not appear as an induced subgraph of $G$. Furthermore,
if $G$ is $H_1$-free and $H_2$-free, we say that $G$ is {\em $(H_1,H_2)$-free}.
Next,
we determine the upper bound for $3$-rainbow index of $(P_5,C_5)$-free graphs.

Zverovich has obtained the following result.

\begin{thm}\cite{Zverovich2003domiant}\label{thm11}
A graph $G$ is a perfect connected-dominant graph if and only if
$G$ contains no induced path $P_5$ and  induced cycle $C_5$.
\end{thm}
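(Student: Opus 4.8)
\emph{Plan.} I would prove the two implications separately, recalling that $G$ is perfect connected-dominant means $\gamma(X)=\gamma_c(X)$ for every connected induced subgraph $X$, and that $\gamma(X)\le\gamma_c(X)$ always holds. For the direction ``perfect connected-dominant $\Rightarrow$ $(P_5,C_5)$-free'' (the easy one), it suffices to note that $P_5$ and $C_5$ are themselves connected graphs on which $\gamma$ and $\gamma_c$ differ: a direct check gives $\gamma(P_5)=\gamma(C_5)=2$, while $\gamma_c(P_5)=3$ (any connected dominating set of $P_5$ is a subpath containing both end‑neighbours $v_2$ and $v_4$, hence contains $v_2v_3v_4$) and $\gamma_c(C_5)=3$ (a connected dominating set of $C_5$ is an arc of at least three vertices). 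Thus if $G$ contains an induced $P_5$ or $C_5$, that subgraph is a connected induced subgraph witnessing $\gamma<\gamma_c$, so $G$ is not perfect connected-dominant.

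For the substantive direction, assume $G$ is $(P_5,C_5)$-free. Since the class of $(P_5,C_5)$-free graphs is closed under induced subgraphs, every connected induced subgraph $X$ of $G$ is again connected and $(P_5,C_5)$-free, so it is enough to establish the single statement: \emph{every connected $(P_5,C_5)$-free graph $H$ has a connected minimum dominating set}, i.e. $\gamma_c(H)\le\gamma(H)$.

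To prove this I would use an extremal argument. Among all minimum dominating sets of $H$ choose one, $D$, for which $H[D]$ has the fewest components, and suppose for contradiction that this number is at least $2$. Two structural facts drive the proof: (i) since $D$ is minimum, hence minimal, every $d\in D$ has a private neighbour; and (ii) since $H$ is connected, between any two components of $H[D]$ there is a path whose internal vertices avoid $D$, and choosing such a path $P=v_0v_1\cdots v_\ell$ shortest over all pairs of components forces $P$ to be an \emph{induced} path, so $P_5$-freeness gives $2\le\ell\le 3$. One then modifies $D$ along $P$ — deleting a suitable vertex of $D$ near an endpoint and inserting an internal vertex of $P$ — to obtain a new minimum dominating set whose induced subgraph has strictly fewer components, contradicting the choice of $D$. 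The hypothesis $C_5$-freeness is used precisely in the length‑$3$ case: combining $P$ with a private neighbour of an endpoint and, when it exists, a neighbour of $v_\ell$ inside the far component, one produces either an induced $P_5$ (ruling out the configuration) or an induced $C_5$, so the cases in which the naive swap fails cannot occur.

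The main obstacle is exactly this exchange step: showing that two components of $H[D]$ can always be merged without increasing either $|D|$ or the number of components. Making it rigorous needs a fairly delicate case analysis on whether the endpoints of $P$ form singleton components, on $\ell=2$ versus $\ell=3$, and on the private neighbours of the vertices being swapped out, with each branch forced to contain an induced $P_5$ or $C_5$. An alternative route for this direction is to invoke the known structure theorem that every connected $(P_5,C_5)$-free graph has a dominating clique (Cozzens--Kelleher; Bacs\'o--Tuza for the $P_5$-free part) and then upgrade it to ``a minimum dominating set can be chosen to induce a clique'', which is automatically connected; but that upgrade reduces to essentially the same exchange argument, so I expect the combinatorial core to be unavoidable in either approach.
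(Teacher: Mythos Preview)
The paper does not contain a proof of this theorem: it is quoted from Zverovich~\cite{Zverovich2003domiant} and used as a black box in the derivation of Theorem~\ref{thm4}. Consequently there is no ``paper's own proof'' to compare your proposal against. Your outline is essentially the argument in Zverovich's original paper (an extremal choice of a minimum dominating set with fewest components, followed by a shortest-path exchange forcing an induced $P_5$ or $C_5$), so if your goal is to reproduce that result your plan is on the right track; but for the purposes of \emph{this} paper no proof is expected here.
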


As shown in Theorem \ref{thm5}, in order to obtain a better bound of $3$-rainbow index, we may turn to  a smallest possible connected $2$-dominating set. For a graph with minimal degree $\delta\geq 3$, B.Reed proved the following conclusion in \cite{B.Reed1996}.

\begin{thm}\cite{B.Reed1996}\label{thm12}
If $G$ is connected graph with $\delta \geq 3$, then
 $\gamma(G)\leq \frac{3n}{8}$.
\end{thm}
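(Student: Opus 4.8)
The plan is to argue by induction on $n = |V(G)|$, the guiding idea being that every connected graph with $\delta \ge 3$ contains a ``well-placed'' set $S$ of at most eight vertices whose deletion lowers the domination number by at most three while (essentially) preserving the hypotheses; since $3/8$ is exactly this ratio, iterating the reduction closes the induction. Because deleting vertices destroys both connectivity and the degree bound, the first move is to replace the displayed statement by something more robust: carry along a set $R$ of vertices that are ``already handled'', and show that if $G$ has no isolated vertex, no component an edge or a triangle, and every vertex outside $R$ has degree at least $3$, then $V(G) \setminus R$ can be dominated by at most $\tfrac{3}{8}(n - |R|)$ vertices, possibly with minor exceptions for a few small components. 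This is the version that survives a deletion step.

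For the inductive step I would take a vertex $v$ of minimum degree and examine the subgraph induced by $N[v]$ together with the vertices just beyond it, and try to choose $S$ so that: (i) $|S| \le 8$; (ii) $v$ together with at most two further vertices of $S$ dominates all of $S$; and (iii) $G - S$ still satisfies the strengthened hypotheses once the (at most two) vertices of $S$ not dominated by $v$, together with any vertex of $G-S$ whose degree has fallen below $3$, are thrown into $R$. Granting such an $S$, the cost is at most $3 + \tfrac{3}{8}(n-8) = \tfrac{3}{8}n$, completing the induction; the handful of graphs with $n < 8$ are checked directly.

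The main obstacle is the case where no such good $S$ exists. This forces a very rigid local structure around every minimum-degree vertex --- roughly, closed neighborhoods that are triangle- or $K_4$-like clusters whose outside attachments are all ``used up'' --- and one then has to enumerate these unavoidable configurations and check the bound on each directly; equivalently, one must pin down the eight-vertex gadgets for which $\gamma = \tfrac{3}{8}n$ is tight and verify the inequality is never actually violated. Carrying out that case analysis, and keeping honest track of how $\delta$ and connectivity degrade under deletion (this is why the $R$-bookkeeping and the precise choice of $S$ are the delicate points), is where the real work lies. A second, essentially equivalent, organization is to build the dominating set greedily while simultaneously partitioning $V(G)$ into ``territories'' $T_d \subseteq N[d]$, one per chosen vertex $d$: each territory has size at least $3$ except during an ``endgame'' in which every still-unassigned vertex already has all of its neighbors assigned, and one then runs a charging argument --- using $\delta \ge 3$ and connectivity to link dense regions (which yield large, surplus territories) to the sparse endgame (which yields the small, deficient ones) --- to conclude that the average territory has size at least $8/3$.
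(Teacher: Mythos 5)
First, a point of context: the paper does not prove this statement at all --- it is quoted verbatim from B.~Reed's 1996 paper \emph{Paths, stars, and the number three}, so there is no in-paper argument to compare yours against. The only question is whether your proposal stands on its own as a proof, and it does not: it is a research plan in which every genuinely hard step is named and then deferred. The inductive scheme ``find $S$ with $|S|\le 8$ dominated by $3$ vertices of $S$, delete, recurse'' gives the arithmetic $3+\tfrac{3}{8}(n-8)=\tfrac{3}{8}n$, but the existence of such an $S$ (together with the bookkeeping set $R$ that is supposed to absorb the degree and connectivity damage) is essentially equivalent to the theorem itself; you acknowledge that when no good $S$ exists one must ``enumerate unavoidable configurations,'' and that enumeration is never carried out. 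The same is true of your alternative ``territories'' formulation: the charging argument that links surplus territories to the deficient endgame ones is asserted, not constructed. Since the constant $3/8$ is attained by explicit connected cubic graphs (chains of $8$-vertex gadgets), there is zero slack anywhere in the induction, which is precisely why Reed's actual proof runs to some twenty pages of delicate case analysis; a sketch that postpones the case analysis has not engaged with the content of the theorem.

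It is also worth saying that the framework you describe is not the one Reed uses. His proof covers $V(G)$ by vertex-disjoint paths chosen to optimize several parameters simultaneously, dominates each path at a rate of roughly one vertex per three path-vertices, and uses the minimum-degree-three hypothesis to handle the paths whose lengths are bad modulo $3$ --- hence the title. Your local-deletion / discharging outline is closer in spirit to later reproofs and to algorithmic treatments of the bound, and could in principle be pushed through, but as written it establishes nothing beyond the (correct) observation that $8$ and $3$ are the numbers one must aim for.
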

For $(P_5,C_5)$-free graphs $\delta \geq 3$, we have $\gamma_c(G)\leq \frac{3n}{8}$.
Inspired by this result, the extension of the idea of connected dominating set to connected $2$-dominating set is what gives the following lemma.

\begin{lem}\label{lem3}
Let $G$ be a connected graph of order $n$ with minimal degree
$\delta\geq 2$. If $D$ is a connected dominating set in a graph $G$, then there is a set of vertices $D'\supseteq D$ such that $D'$ is a connected
2-dominating set and $|D'|\leq \frac{1}{2}n+\frac{1}{2}|D|$.
\end{lem}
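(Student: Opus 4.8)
The plan is to start from the connected dominating set $D$ and add, one vertex at a time, enough vertices to fix every ``deficient'' vertex outside the set, while keeping the set connected and controlling the size. Call a vertex $v\in V(G)\setminus D$ \emph{deficient} if $|N_G(v)\cap D|=1$; since $D$ is dominating, every vertex outside $D$ has at least one neighbor in $D$, so the deficient vertices are exactly those with exactly one foot in $D$ (the $\delta\ge 2$ hypothesis guarantees each such $v$ has a second neighbor, which lies outside $D$). The goal is to enlarge $D$ to a connected $2$-dominating set $D'$; equivalently, we must absorb enough vertices so that no vertex outside the final set has only one neighbor inside it.

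The main step is a counting/greedy argument. First I would observe that the deficient vertices, together with $D$, can be analyzed through the bipartite-type incidence between deficient vertices and their unique foots. I would build $D'$ as follows: repeatedly pick a deficient vertex $v$ (with respect to the current set) and add $v$ to the set. Because $G[D]$ is connected and $v$ is adjacent to $D$, connectivity is preserved at each step. Adding $v$ may turn some previously-satisfied vertex into a deficient one only if that vertex's ``other'' neighbor was $v$; but more usefully, adding a deficient $v$ \emph{removes} $v$ from the deficient list and can only help its neighbors. To get the bound $|D'|\le \tfrac12 n+\tfrac12|D|$, the key inequality to extract is that the number of vertices we must add is at most $\tfrac12(n-|D|)$: intuitively, the set $V(G)\setminus D$ has size $n-|D|$, and we should be able to choose the vertices we absorb so that each absorbed vertex ``accounts for'' at least one vertex that stays outside — for instance, by absorbing at most one endpoint of each edge joining two deficient vertices, or by a direct argument that after absorbing a maximal independent-in-$H$ set of deficient vertices the remaining outside vertices each gain a second neighbor inside. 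Then $|D'|\le |D|+\tfrac12(n-|D|)=\tfrac12 n+\tfrac12|D|$, as desired.

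Concretely, I would set $H=G-D$ as in the proof of Theorem~\ref{thm5}, let $F$ be the set of deficient vertices, and choose $A\subseteq F$ to be a maximal subset that is independent in $H$ and such that $D\cup A$ keeps every vertex of $F\setminus A$ with a neighbor in $D\cup A$; take $D'=D\cup A$. Connectivity of $G[D']$ is immediate since each vertex of $A$ attaches to $D$. For $2$-domination: a vertex $v\notin D'$ either was already non-deficient (so it had $\ge 2$ foots in $D\subseteq D'$), or $v\in F\setminus A$, in which case $v$ has its original foot in $D$ plus a neighbor in $A$ (this is what maximality of $A$ should force), or $v$'s ``other'' neighbor in $H$ lies in $A$ — in each case $v$ gets a second neighbor in $D'$. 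The size bound follows because $A$ being independent in $H$ and ``covering'' $F$ lets us injectively map $F\setminus A$ (and more generally the vertices that remain outside) to distinct vertices of $A$, or more crudely because $|A|\le |V(H)|/2$ whenever each vertex of $A$ can be paired with a distinct vertex of $V(H)\setminus A$ that it helps dominate.

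The hard part will be the bookkeeping that simultaneously guarantees (i) $D'$ is genuinely $2$-dominating — i.e.\ that the greedy/maximal choice of $A$ leaves \emph{no} vertex outside with fewer than two neighbors inside, including vertices that were non-deficient to begin with but whose second neighbor got absorbed — and (ii) the clean bound $|A|\le\tfrac12(n-|D|)$. These two goals pull in opposite directions: to ensure $2$-domination one wants to add many vertices, but the size bound caps how many can be added. The resolution should be a charging scheme in which every vertex placed into $A$ is charged to a private vertex that remains outside $D'$ and is ``rescued'' by it, combined with the observation that a vertex cannot need to be rescued and also serve as a rescuer. I expect the proof to hinge on choosing $A$ as (say) a spanning-forest bipartition class of $H$ restricted to the deficient part, mirroring the $X/Y$ trick already used for Theorem~\ref{thm5}, so that independence and the covering property come for free.
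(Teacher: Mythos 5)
You have the right general strategy --- enlarge $D$ by absorbing vertices of $H=G-D$, keep connectivity via the foots, and charge each absorbed vertex against a vertex that stays outside, using a spanning-forest bipartition of $H$ --- and this is indeed the skeleton of the paper's proof. But your concrete construction does not work: you insist that the absorbed set $A$ consist only of \emph{deficient} vertices ($A\subseteq F$), chosen as a maximal independent set in $H[F]$ (or ``a bipartition class restricted to the deficient part''). Consider $D=\{d_1,d_2\}$ with $d_1d_2\in E$, a vertex $c$ adjacent to $d_1,d_2,l_1,l_2,l_3$, and each $l_i$ adjacent to $c$ and $d_1$. Here $\delta=2$, $D$ is a connected dominating set, $H$ is a star with center $c$, and $F=\{l_1,l_2,l_3\}$ is independent in $H$, so your maximal independent $A$ is forced to be all of $F$ (to $2$-dominate $l_i$ you must put $l_i$ itself in, since its only $H$-neighbor $c$ is non-deficient and hence barred from $A$). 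That gives $|D'|=|D|+3=5$, while the lemma requires $|D'|\le \frac{1}{2}\cdot 7+\frac{1}{2}\cdot 2=4.5$. The variant ``bipartition class restricted to the deficient part'' fails the other way: $X\cap F=\emptyset$ leaves the $l_i$ with only one neighbor in $D'$. Your own injective-pairing argument for $|A|\le\frac{1}{2}|V(H)|$ breaks here because the three vertices of $A$ would all have to be charged to the single outside vertex $c$.

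The missing idea is that the rescuing vertex need not be deficient. The paper takes the spanning forest of the non-singleton components of $H$ (singletons already have two foots since $\delta\ge 2$), fixes the bipartition $X,Y$ with $|X|\le|Y|$, and then, whenever some $v\in V(H)\setminus D'$ still has only one neighbor in $D'$, adds to $D'$ either a forest-neighbor $u\in X$ of $v$ (if $v\in Y$) or $v$ itself (if $v\in X$) --- in both cases a vertex of $X$, regardless of whether that vertex is deficient. Every iteration consumes a fresh vertex of $X$, so at most $|X|\le\frac{1}{2}(n-|D|)$ vertices are added; equivalently one can simply take $D'=D\cup X$, since each $y\in Y$ then has a foot in $D$ plus its tree-neighbor in $X$. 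In the example above this correctly adds only the non-deficient center $c$. So your write-up identifies the right tool but, as you yourself flag in the final paragraph, never resolves the bookkeeping, and the specific resolution you propose is refuted by the star configuration.
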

\begin{proof}
There are two types of the components of $G\setminus D$: singletons
and connected subgraphs. Let $P$ be the set of the singletons, and $Q$ be the set of the connected components of $G\setminus D$. Note that
$G\setminus D=P\cup Q$. Since $\delta\geq 2$, for any vertex $v$ in $P$, it has at least two neighbors in $D$. In every non-singleton connected
component of $Q$, we choose a spanning tree. This gives a spanning forest on
$V(Q)$. Choose $X$ and $Y$ as any one of
the bipartitions  defined by this forest. Without loss of generality, we suppose that $|X|\leq|Y|$.

Stage ~~$D'=D$

~~~~while  $ \exists v \in V(Q)$ such that $|N(v)\bigcap D|=1$

~~~~$ \{$

~~~~~~~~~If $v \in Y $

~~~~~~~~~~~Pick a vertex $u \in N(v)\bigcap X $.

~~~~~~~~~~~Let $D' =D' \bigcup \{u\} $

~~~~~~~~~else

~~~~~~~~~~~$D' =D'  \bigcup \{v\}$

~~~~$ \}$

Clearly $D'$ remains  to be connected. Since stage ends only
when any vertex in $V(Q)$ has at least 2 neighbors in $D'$. So
the final $D'$ is a connected $2$-dominating  set. Let $k$ be the number of
iterations executed. Since we add a vertex in
$X$ to $D'$, $|X|$ reduces by 1 in every iteration, $k\leq |X|\leq
\frac{1}{2}(n-|D|)$, so $|D'|\leq
|D|+k \leq |D|+\frac{1}{2}(n-|D|)=\frac{1}{2}n+\frac{1}{2}|D|$.
\end{proof}

For a connected $(P_5,C_5)$-free graph $G$ with $\delta \geq 3$, we can derive the
following result by  Theorem \ref{thm5}, Theorem \ref{thm11}, Theorem \ref{thm12} and Lemma \ref{lem3}.

\begin{thm}\label{thm4}
For every connected $(P_5,C_5)$-free graphs $G$ with $\delta(G)\geq 3$,
$rx_3(G)\leq \frac{11}{16}n+3$.
\end{thm}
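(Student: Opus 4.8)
The plan is to chain together the four cited results. First I would start from the fact that a connected $(P_5,C_5)$-free graph is a perfect connected-dominant graph by Theorem~\ref{thm11}, so its connected domination number equals its domination number: $\gamma_c(G)=\gamma(G)$. Combining this with Reed's bound (Theorem~\ref{thm12}), which applies since $\delta(G)\geq 3$, we get that $G$ has a connected dominating set $D$ with $|D|=\gamma_c(G)=\gamma(G)\leq \tfrac{3n}{8}$.

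Next I would invoke Lemma~\ref{lem3}: since $\delta(G)\geq 3\geq 2$ and $D$ is a connected dominating set, there exists a connected $2$-dominating set $D'\supseteq D$ with $|D'|\leq \tfrac{1}{2}n+\tfrac{1}{2}|D|\leq \tfrac{1}{2}n+\tfrac{1}{2}\cdot\tfrac{3n}{8}=\tfrac{1}{2}n+\tfrac{3n}{16}=\tfrac{11n}{16}$. Then I would bound $rx_3(G[D'])$ trivially by the number of edges of a spanning tree of the connected graph $G[D']$, namely $rx_3(G[D'])\leq |D'|-1\leq \tfrac{11n}{16}-1$; indeed any spanning tree of $G[D']$ with all edges distinctly colored is trivially a $3$-rainbow coloring since it contains every vertex set.

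Finally, applying Theorem~\ref{thm5} to the connected $2$-dominating set $D'$ (valid because $\delta(G)\geq 3$), we obtain
\[
rx_3(G)\leq rx_3(G[D'])+4\leq \Big(\tfrac{11n}{16}-1\Big)+4=\tfrac{11}{16}n+3,
\]
which is the desired bound.

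I do not anticipate a serious obstacle here, as all the heavy lifting is done by the cited theorems; the only points requiring a line of justification are that $\delta(G)\geq 3$ simultaneously licenses Reed's theorem, Lemma~\ref{lem3}, and Theorem~\ref{thm5}, and that $rx_3$ of a connected graph is at most the size of a spanning tree. If a sharper constant were wanted, the potential slack lies in the crude estimate $rx_3(G[D'])\leq |D'|-1$ (one could hope for something better using the structure of $G[D']$), but for this statement the naive bound suffices.
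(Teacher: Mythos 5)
Your proposal is correct and follows exactly the same chain as the paper: Theorem~\ref{thm11} to get $\gamma_c(G)=\gamma(G)$, Reed's bound to get $\gamma_c(G)\leq \frac{3n}{8}$, Lemma~\ref{lem3} to obtain a connected $2$-dominating set of size at most $\frac{11}{16}n$, the spanning-tree bound $rx_3(G[D'])\leq |D'|-1$, and finally Theorem~\ref{thm5}. No differences worth noting.
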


\begin{proof}
For every connected $(P_5,C_5)$-free graphs $G$ with $\delta(G)\geq 3$, from the Theorem \ref{thm11},
$\gamma(G)=\gamma_c(G)$. And by the Theorem \ref{thm12}, we have $\gamma(G)\leq \frac{3n}{8}$. Thus, $\gamma_c(G)\leq \frac{3n}{8}$. Combining this with Lemma \ref{lem3}, the graph $G$ have a connected $2$-dominating set $D$ with order less than $\frac{11}{16}n$. Observe that the connected $2$-dominating set $D$ can get a 3-rainbow coloring using $|D|-1$ colors by ensuring that every edge of some spanning tree gets distinct color. So the upper bound follows immediately from Theorem \ref{thm5}.
\end{proof}

\section{Upper bounds for $3$-rainbow index of  general graphs}
In this section, we derive a sharp bound for $3$-rainbow index of general graphs by block decomposition. And we also show a better bound for  $3$-rainbow index of general graphs with $\delta(G)\geq 3$ by connected $2$-dominating set.

 Let $\mathcal{A}$ be the set of blocks of $G$, whose element is $K_2$; Let $\mathcal{B}$ be the set of blocks of $G$, whose element is $K_3$; Let $\mathcal{C}$ be the set of blocks of $G$, whose element $X$ is a cycle or a block of order $4\leq |V(X)|\leq 6$; Let $\mathcal{D}$ be the set of blocks of $G$, whose element $X$ is  not a cycle and $|V(X)|\geq 7$.

\begin{thm}\label{thm4}
Let $G$ be a connected graph of order $n~(n \geq 3)$. If $G$ has a
block decomposition $B_1,B_2,\cdots,B_q$, then $rx_3(G)\leq n-|\mathcal{C}|-2|\mathcal{D}|-1$, and the upper bound is tight.
\end{thm}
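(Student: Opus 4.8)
The plan is to colour $G$ block by block, giving each block its own private set of colours, and then to argue that for any three vertices the relevant blocks can be stitched together along the block--cut tree. For a block $B$ write $n(B)$ for its order and set $f(B):=n(B)-1$ if $B\in\mathcal A\cup\mathcal B$, $f(B):=n(B)-2$ if $B\in\mathcal C$, and $f(B):=n(B)-3$ if $B\in\mathcal D$; the four families partition the blocks of $G$ (a block of order $2$ is $K_2$, a block of order $3$ is $K_3$, and every block of order $\geq 3$ is $2$-connected). \textbf{Step 1 (local colourings).} I would first show that every block $B_i$ admits an edge colouring with $f(B_i)$ colours such that every set of at most three vertices of $B_i$ lies in a rainbow tree of $B_i$. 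For $B_i=K_2$ this is clear with one colour, and for $B_i=K_3$ two colours suffice. For $B_i$ of order at least $4$, Theorem~\ref{thm0} gives $rx_3(B_i)\leq n(B_i)-2$, and $rx_3(C_m)=m-2$; together these cover $\mathcal A,\mathcal B,\mathcal C$. If $B_i\in\mathcal D$, i.e.\ a $2$-connected non-cycle of order at least $7$, then $B_i$ is none of the extremal graphs in the equality clause of Theorem~\ref{thm0} (all of which have order at most $6$), so $rx_3(B_i)\leq n(B_i)-3$. Finally, any $3$-rainbow colouring of a connected graph on at least three vertices automatically places every one- or two-element vertex set in a rainbow tree (take the path inside a rainbow Steiner tree of a suitable triple), so the desired property follows from a $3$-rainbow colouring in each case.

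\textbf{Step 2 (combine with disjoint palettes).} Colour $B_1,\dots,B_q$ with pairwise disjoint colour sets of sizes $f(B_1),\dots,f(B_q)$. The block--cut tree gives the identity $\sum_i n(B_i)=n+q-1$, so $\sum_i\big(n(B_i)-1\big)=n-1$ and the total number of colours used is $\sum_i f(B_i)=(n-1)-|\mathcal C|-2|\mathcal D|$. \textbf{Step 3 (verification).} Fix $S=\{u,v,w\}$. In the block--cut tree attach each $x\in S$ to the node equal to $x$ (if $x$ is a cut vertex) or to the unique block containing $x$ (otherwise), and let $\mathcal T_S$ be the minimal subtree spanning these three nodes. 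Since $\mathcal T_S$ has at most three leaves, every block node has degree at most $3$ in $\mathcal T_S$, so for each block $B$ occurring in $\mathcal T_S$ the set of its \emph{terminals} --- the cut vertices joining $B$ to its neighbours in $\mathcal T_S$, together with the at most one vertex of $S$ interior to $B$ --- has size at most three. By Step~1, $B$ contains a rainbow tree $T_B$ through its terminals; consecutive blocks of $\mathcal T_S$ meet in their shared cut vertex, which is a terminal of both, so $\bigcup_B T_B$ is a union of trees glued at single cut vertices along a tree, hence a tree of $G$ containing $u,v,w$, and it is rainbow since distinct $T_B$ use disjoint colours. This yields $rx_3(G)\leq n-1-|\mathcal C|-2|\mathcal D|$.

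\textbf{Tightness and the main obstacle.} For tightness I would exhibit extremal families: any tree has $rx_3=n-1$ (all blocks in $\mathcal A$); any cycle $C_n$ has $rx_3=n-2=n-1-|\mathcal C|$; and more generally one can glue along cut vertices copies of the extremal graphs of Theorem~\ref{thm0}, where the matching lower bound is then forced block by block. The only genuinely non-routine step is the $\mathcal D$ case of Step~1: the saving of a \emph{third} colour there is not supplied by the generic bound $rx_3\leq n-2$ but only by the precise equality characterisation in Theorem~\ref{thm0} (together with the fact that blocks of order $\geq 3$ are $2$-connected). Everything else --- the counting identity $\sum_i n(B_i)=n+q-1$ and the ``at most three terminals per block'' observation driving the gluing --- is bookkeeping.
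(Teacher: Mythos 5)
Your argument for the upper bound is essentially the paper's: bound each block via Theorem~\ref{thm0}, using the equality characterisation to get $rx_3(X)\leq |X|-3$ for $X\in\mathcal{D}$ (the non-cycle extremal graphs all have order at most $6$), assign disjoint palettes, and sum via $\sum_i\big(n(B_i)-1\big)=n-1$; you also supply the block--cut-tree gluing verification that the paper leaves implicit, which is a genuine improvement in rigour. The only place you fall short is tightness: your rigorous examples (trees, a single cycle) attain the bound only when $\mathcal{D}=\emptyset$, whereas the paper exhibits an explicit graph built from $K_2$'s, $4$-cycles and one chorded $7$-cycle in which the Steiner tree for a chosen triple $\{u,v,w\}$ has exactly $n-|\mathcal{C}|-2|\mathcal{D}|-1$ edges, forcing equality; your ``glue extremal graphs of Theorem~\ref{thm0}'' sketch would still need a concrete block in $\mathcal{D}$ achieving $rx_3=|X|-3$ together with a worked lower-bound argument to cover that case.
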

\begin{proof}
Let $G$ be a connected graph of order $n$ with $q$ blocks in
its block decomposition. If $q=1$, then we have done by  Theorem \ref{thm0} and $rx_3(K_3)=2$, which satisfies the above bound. Thus, we suppose $q\geq 2$.

Note that $|\mathcal{A}\cup \mathcal{B}\cup \mathcal{C}\cup \mathcal{D}|=q$.
From the Theorem \ref{thm0}, we get $rx_3(X)\leq |X|-2$ for $X\in \mathcal{C} $ and
$rx_3(X)\leq |X|-3$ for $X\in \mathcal{D}$.
Hence,
it follows that
\begin{eqnarray*}
rx_3(G)&\leq &\sum_{X\in \mathcal{A}}1 +\sum_{X\in \mathcal{B}}2+\sum_{X\in \mathcal{C}}(|X|-2)+\sum_{X\in \mathcal{D}}(|X|-3)\\
&=&n-|\mathcal{C}|-2|\mathcal{D}|-1.
\end{eqnarray*}

In order to prove that the upper bound is tight, we construct
the graph $G$ of order  $n$, as shown in Figure 1,
consisting
of $(n-3r-7)$ $K_2$, $r$ cycles of order 4 and one 7-length-cycle
with a chord.
It is clear that   $|\mathcal{C}|$=$r$, $|\mathcal{D}|=1$. We consider the size of a rainbow tree $T$ contain the vertices $\{u,v,w\}$. $|E(T)|=n-4r-7+3r+4=n-r-3$ and
$rx_3(G)\leq n-|\mathcal{C}|-2|\mathcal{D}|-1=n-r-3$ by the above theorem.
we have $rx_3(G)=n-|\mathcal{C}|-2|\mathcal{D}|-1$.
\begin{figure}[h,t,b,p]
\begin{center}
\includegraphics[width=10cm]{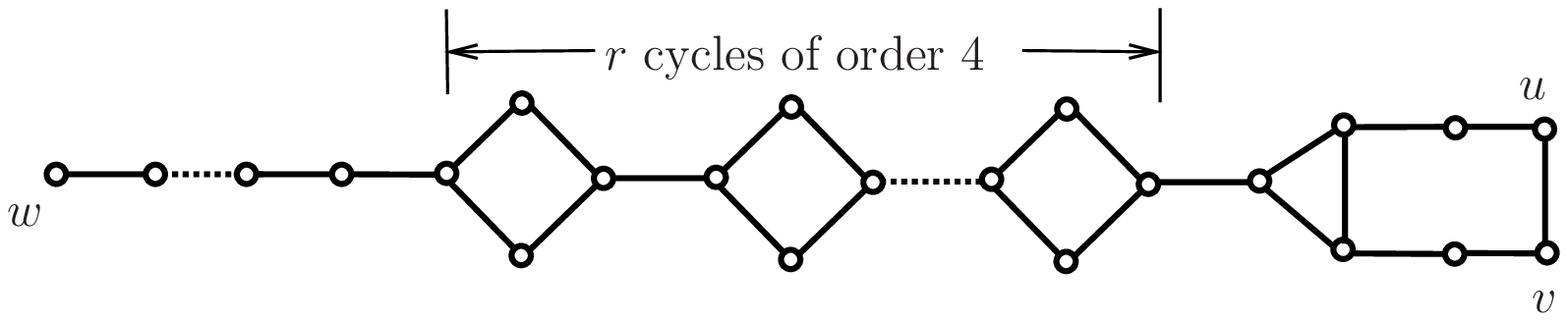}\\
Figure 1: Graph for Theorem \ref{thm4}
\end{center} \label{fig8}
\end{figure}
\end{proof}

We finish this section with  general graphs with minimal degree at least $3$. Here, we denote as $q_{max}(G)$  the maximum number of components of $G\backslash u$ among all vertices $u\in V$. The following result is needed in the sequel.

\begin{thm}\cite{AD}\label{thm13}
 Let G be a connected graph on $n$ vertices with minimum degree $\delta\geq 2$ and let $k$ be an integer with $1 \leq k\leq \delta$. Then $\gamma_k^c\leq n-q_{max}(G)(\delta-k+1)$
\end {thm}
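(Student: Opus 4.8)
The plan is to exhibit, starting from a well‑chosen vertex $u$, an explicit connected $k$‑dominating set of size $n-q_{max}(G)(\delta-k+1)$. Fix a vertex $u\in V(G)$ for which $G\setminus u$ has exactly $q:=q_{max}(G)$ components, say $C_1,\dots ,C_q$, and for each $i$ write $G_i:=G[\{u\}\cup V(C_i)]$, which is connected since $u$ has a neighbour in $C_i$. The dominating set I would use is $D:=V(G)\setminus\bigcup_{i=1}^{q}A_i$, where each $A_i\subseteq V(C_i)$ is a set of \emph{exactly} $\delta-k+1$ vertices (well defined because $k\le\delta$). Counting vertices gives $|D|=n-\sum_i|A_i|=n-q(\delta-k+1)$ at once, so the whole argument reduces to choosing the sets $A_i$ so that $D$ is connected and $k$‑dominating.

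The observation that makes the domination part essentially free is the following: for $a\in A_i$ we have $N_G(a)\subseteq V(C_i)\cup\{u\}$ because $C_i$ is a component of $G\setminus u$, so the only neighbours of $a$ lying outside $D$ are those lying in $A_i$, of which there are at most $|A_i|-1=\delta-k$. Since $\deg_G(a)\ge\delta$ (here is where the minimum‑degree hypothesis enters), $a$ keeps at least $\delta-(\delta-k)=k$ neighbours in $D$; this holds for \emph{any} choice of $A_i$ of the prescribed size, in every component simultaneously. Hence it only remains to choose $A_i\subseteq V(C_i)$ so that $G_i-A_i$ is connected: once that is done, $G[D]$ is the union of the connected graphs $G_i-A_i$, all of which contain $u$, and is therefore connected.

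To produce such an $A_i$ I would peel vertices off $G_i$ greedily. A connected graph on at least two vertices has at least two non‑cut vertices (for instance the leaves of any spanning tree), so in particular it has a non‑cut vertex different from $u$; deleting such a vertex preserves connectedness and keeps $u$. Starting from $G_i$ and repeating this $\delta-k+1$ times yields the set $A_i$. This is legitimate throughout because $|V(C_i)|\ge\delta$ — any $v\in C_i$ has at least $\delta-1$ neighbours inside $C_i$, plus $v$ itself — so $|V(G_i)|\ge\delta+1$ and after at most $\delta-k$ deletions at least $k+1\ge2$ vertices remain. Assembling the $A_i$ gives $D$ with the required size, connectivity and $k$‑domination, so $\gamma_k^c(G)\le|D|=n-q_{max}(G)(\delta-k+1)$.

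The one thing that really has to be \emph{seen} is that the per‑component budget $\delta-k+1$ is exactly the amount that renders the $k$‑domination condition automatic, so that connectivity can then be handled completely separately — and cheaply — by the non‑cut‑vertex peeling, using that $u$ is a cut vertex separating the $C_i$. A few small‑parameter corners ($q_{max}=1$, $k=\delta$, $\delta=2$) should be checked, but they are absorbed without change by the same argument.
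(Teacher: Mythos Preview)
The paper does not prove this theorem at all: it is quoted verbatim from Hansberg~\cite{AD} and used only as a black box to obtain Theorem~\ref{thm14}, so there is no in-paper argument to compare against.

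Your argument is correct. Fixing $u$ with $q$ components $C_1,\dots,C_q$ in $G\setminus u$, peeling $\delta-k+1$ non-cut vertices (all $\neq u$) from each $G_i=G[\{u\}\cup V(C_i)]$, and taking $D$ to be the complement of the peeled set does exactly what is required: the size count is immediate; the $k$-domination check for $a\in A_i$ uses only that $N_G(a)\subseteq V(C_i)\cup\{u\}$ and $|A_i|=\delta-k+1$; and connectivity of $G[D]$ follows because each $G_i-A_i$ is connected and they all share $u$. The size estimate $|V(C_i)|\ge\delta$ (hence $|V(G_i)|\ge\delta+1$) is the right ingredient to guarantee that at every one of the $\delta-k+1$ peeling steps the current graph still has at least two vertices, so a non-cut vertex different from $u$ exists. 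The edge cases you flag ($q_{max}=1$, $k=\delta$, $\delta=2$) indeed go through unchanged.
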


For general graphs with  $\delta \geq 3$, we obtain an upper bound for $3$-rainbow index  from Theorem \ref{thm5} and Theorem \ref{thm13}.
\begin{thm}\label{thm14}
Let $G$ be a connected graph with minimal degree
$\delta\geq 3$. Then $rx_3(G)\leq n-q_{max}(\delta-1)+3$.
\end{thm}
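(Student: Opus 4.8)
The plan is to combine two results that are already available in the excerpt: the structural bound of Theorem~\ref{thm5}, namely $rx_3(G)\le rx_3(G[D])+4$ for any connected $2$-dominating set $D$ of a graph with $\delta\ge 3$, and the cardinality bound of Theorem~\ref{thm13}, which for $k=2$ gives a connected $2$-dominating set $D$ with $|D|=\gamma_2^c(G)\le n-q_{max}(G)(\delta-1)$. First I would fix such a set $D$ realizing $\gamma_2^c(G)$ and observe that, since $G[D]$ is connected, it has a spanning tree, and coloring the $|D|-1$ edges of that spanning tree with distinct colors (and the remaining edges of $G[D]$ arbitrarily) yields a $3$-rainbow coloring of $G[D]$: any three vertices of $D$ lie in the spanning tree, which is rainbow. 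Hence $rx_3(G[D])\le |D|-1$.

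Next I would simply chain the inequalities:
\begin{eqnarray*}
rx_3(G)&\le& rx_3(G[D])+4\\
&\le& (|D|-1)+4\\
&=& |D|+3\\
&\le& \bigl(n-q_{max}(G)(\delta-1)\bigr)+3,
\end{eqnarray*}
which is exactly the claimed bound $rx_3(G)\le n-q_{max}(\delta-1)+3$. The only point requiring a little care is checking the hypotheses of the two cited theorems line up: Theorem~\ref{thm13} needs $1\le k\le\delta$, and here $k=2\le 3\le\delta$, so it applies; Theorem~\ref{thm5} needs a connected $2$-dominating set and $\delta\ge 3$, both of which we have. One should also note the degenerate possibility $q_{max}(G)=1$ (when $G$ is $2$-connected), in which case the bound reads $rx_3(G)\le n-(\delta-1)+3=n-\delta+4$, still a valid statement, so no separate case analysis is needed.

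There is essentially no hard step here — the theorem is a corollary assembled from previously established machinery, and the main "obstacle" is merely bookkeeping: making sure the spanning-tree coloring of $G[D]$ genuinely certifies $rx_3(G[D])\le |D|-1$ (this is the same observation used in the proof of Theorem~\ref{thm4} for $(P_5,C_5)$-free graphs), and making sure the $+4$ from Theorem~\ref{thm5} combines with the $-1$ from the spanning tree to give $+3$. If one wanted a sharper statement one could instead bound $rx_3(G[D])$ using Theorem~\ref{thm5} recursively or Theorem~\ref{thm0} on $G[D]$, but for the stated bound the crude estimate $rx_3(G[D])\le|D|-1$ suffices, so I would not pursue that refinement.
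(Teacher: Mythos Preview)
Your proposal is correct and follows exactly the approach the paper intends: apply Theorem~\ref{thm13} with $k=2$ to obtain a connected $2$-dominating set $D$ of size at most $n-q_{max}(G)(\delta-1)$, bound $rx_3(G[D])\le |D|-1$ via a spanning tree, and then invoke Theorem~\ref{thm5}. The paper itself gives no further detail beyond citing Theorems~\ref{thm5} and~\ref{thm13}, so your write-up in fact fills in the same chain of inequalities that is implicit there.
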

Note that the bound of $3$-rainbow index is better for the graphs with cut vertices and larger minimal degree.

\end{document}